\documentclass[11pt]{article}
\usepackage[utf8]{inputenc}
\usepackage{amsmath, amssymb, amsthm}
\usepackage{hyperref}

\newtheorem{theorem}{Theorem}
\newtheorem{lemma}{Lemma}
\newtheorem{corollary}{Corollary}
\theoremstyle{remark}
\newtheorem{remark}{Remark}

\title{The Planar Case of Thomas' Positive Circuits Conjecture}
\author{Natan Katz \\
\small Fachbereich Mathematik, Johann Wolfgang Goethe Universität \\
\small D-60054 Frankfurt am Main, Germany \\
\small \texttt{natan.katz@luminaisec.com}}
\date{December 6, 2025}

\begin{document}

\maketitle

\begin{abstract}
The notion of circuit refers to a cyclic oriented influence between the elements of a dynamical system. There are two classes of circuit: positive and negative. R. Thomas conjectured that a necessary condition of multi stationarity is the existence of positive circuits. In this paper we use dynamical system tools and planar analysis to find conditions for which the conjecture holds for planar systems.
\end{abstract}

\section{Introduction}
In the late 1940's Delbrück suggested a new approach to modelling cell differentiation ([5, 6]). He proposed a model admitting multi stationarity which is generated by mutual inhibition. This model gave rise to the idea that the phenomenon of multi stationarity, which was already well known in physics, is related to a mutual influence between the variables of a system. In 1980 Rene Thomas ([3, 6], [11] [20]) generalized Delbrück's suggestion with the following conjecture: ``The presence of a positive circuit is a necessary although not sufficient condition for the existence of multi stationarity''. This conjecture had been stated for both logical, or discrete, and continuous time functions ([16]). The former has been completely studied ([11, 13]), while the latter has been the subject of many works ([3, 6], [11]-[20]) proving it for different cases and using it for biological applications such as immunology and memory ([6, 16, 19, 20]).

In this paper we restrict attention to planar continuous systems. In such systems the limit set cannot contain strange attractors. Studying the nature of the zeros and periodic orbits of such a system provides a complete picture of the system's flow. It is known from the theory of planar dynamical systems that if any isolated zero is either stable or surrounded by periodic orbit for which the exterior side is stable, then it is unique. Therefore we search in this paper for conditions of the stability of zeros in a generic planar system which contains no positive circuits in the entire plane.

\section{What are Circuits?}
Consider the dynamical system
\begin{equation}
\frac{dx}{dt}=f(x)
\end{equation}
where $x$ is an $n$-dimensional vector. The Jacobian matrix of the system is the $n \times n$ matrix with components
\begin{equation}
a_{ij}=\frac{\partial f_{i}}{\partial x_{j}} \quad i,j=1,2,\dots,n
\end{equation}
A non-vanishing entry implies that the variable $x_{j}$ exerts an influence on variable $x_{i}$; a positive entry indicates activation, while a negative indicates inhibition. This idea extends to a larger number of variables. Picking an entry $a_{i_{1}i_{2}}$ then $a_{i_{2}i_{3}}$ and repeating this process until we reach an entry of the form $a_{i_{j}i_{1}}$, where $j \le n$, we form a closed path from the index $i_{1}$ to itself. The set of variables that correspond to the set of indices in the path is called a circuit and the variables are its edges.

The sign of the product of all of the picked entries represents the self exertion of each variable on itself via the rest of the variables in the circuit ([6, 16]). For a positive product of the circuit edges the circuit is called a positive circuit and implies self activation, while for a negative product it is called a negative circuit and implies self inhibition. (Remark: The non-vanishing terms of the Jacobian determinant may represent either circuits of length $n$ or a union of smaller circuits. Therefore one need not identify the notation of permutation that appears above with the notation in the theory of determinants).

Experiments which led to Thomas' conjecture have shown that positive circuits are responsible for multi stationarity of systems, while negative circuits are responsible for stable or periodic behavior. The variables in a system which has a few stable steady states may use positive circuits as a decision mechanism between these states. If there are no positive circuits the variables keep the system in one nearly optimal state ([13, 21]).

Circuits are studied not only in graph theory but also in applied sciences in particular biology ([2, 12, 21]). In immunology combinations of positive and negative circuits arise in T cell model of help suppressor interaction ([7, 9]). In neurobiology the neuron cells generate synapse connections between themselves: a cell exerts an influence on its neighbor via these connections. The sign of such circuits is determined by the parity number of the cells. Such structures are also used in memory modeling ([2, 6]). Circuits may use in models of spatial structures and competition between different populations.

\section{Problem Description}
In this section we present an analytic description of the problem. Let the planar system:
\begin{equation}
\frac{dx}{dt}=f(x,y)
\end{equation}
\begin{equation}
\frac{dy}{dt}=g(x,y)
\end{equation}
$F$ denotes the vector field $(f, g)$. Recall that the conjecture claims that the existence of a positive circuit is a necessary condition for multiple steady states. We assume that there are no positive circuits in the entire plane. Hence $\forall(x,y) \in \mathbb{R}^{2}$ we have
\begin{equation}
f_{x}(x,y) \le 0, \quad g_{y}(x,y) \le 0
\end{equation}
\begin{equation}
g_{x}(x,y)f_{y}(x,y) \le 0
\end{equation}
The theory of planar dynamical systems implies that if any isolated zero is stable then it is unique. Thus for each isolated zero of (3--4) we search conditions for its stability. We assume that the right hand side functions of equations (3--4) are smooth enough and that they do not vanish on any open set (i.e. in any $\epsilon$ neighborhood in the plane there exists a point $(x, y)$ such that $F(x,y)$ do not vanish).

\section{Preliminary Study}
We now consider a class of r.h.s functions that satisfy Thomas' conjecture.

\begin{lemma}
Let $x_{1}, x_{2}$ s.t. $x_{1} < x_{2}$, Let $I=[x_{1},x_{2}]\times\{0\}$ interval. If there exist
\begin{equation}
f(I)\equiv0
\end{equation}
\begin{equation}
g(x_{1},0)=g(x_{2},0)=0
\end{equation}
Then $g(I)\equiv0$.
\end{lemma}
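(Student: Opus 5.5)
The plan is to reduce the statement to a one-variable fact about $h(x):=g(x,0)$ on $[x_1,x_2]$. Since $h(x_1)=h(x_2)=0$, it suffices to show that $h$ is monotone on $I$, because a monotone function with equal endpoint values is constant. Monotonicity of $h$ means $g_x(\cdot,0)$ has a fixed sign on $I$. The only hypothesis linking $g_x$ to the rest of the system is $g_xf_y\le 0$ from (6), so the strategy is to show that $f_y$ has a fixed sign near $I$ (at least on each side of it). The information about $f$ comes from (7) together with $f_x\le 0$ from (5).

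\emph{Step 1: $f_y(\cdot,0)$ is constant on $I$.} Fix $x_1\le a<b\le x_2$. By (5), $f(b,y)-f(a,y)\le 0$ for every $y$. Expand in $y$ using (7):
\[
f(b,y)-f(a,y)=y\,\bigl(f_y(b,0)-f_y(a,0)\bigr)+O(y^2).
\]
Divide by $y>0$ and let $y\to 0^+$; this gives $f_y(b,0)\le f_y(a,0)$. Dividing by $y<0$ instead gives the reverse inequality. Hence $f_y(x,0)=c$ for all $x\in[x_1,x_2]$.

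\emph{Step 2: the case $c\neq 0$.} By continuity, $g_x(x,0)$ at each point of $I$ is a limit of values of $g_x$ at points where $f_y$ has the sign of $c$. There $g_x$ has the opposite sign (or vanishes) by (6), so $g_x(\cdot,0)$ has a fixed weak sign on $I$. Thus $h$ is monotone and $h\equiv 0$.

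\emph{Step 3: the degenerate case $c=0$.} I expect this to be the main obstacle. Here (6) gives no information on the axis itself. I would use the smoothness assumption and the standing hypothesis that $F$ does not vanish on open sets. First, iterate Step 1 on higher Taylor coefficients: if $\partial_y^j f(\cdot,0)\equiv 0$ on $I$ for $j<k$, then applying the same divided-difference argument with $y^k$ in place of $y$ shows that $c_k:=\partial_y^k f(\cdot,0)$ is constant on $I$. Let $k$ be the first index with $c_k\neq 0$. If no such $k$ exists, I would argue, using ``smooth enough'' in the sense of real-analytic, that $f\equiv 0$ near $I$, and then argue directly that this is excluded or handled by the nonvanishing hypothesis on $F$; this is where the argument is least certain.

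Given $k$, near $I$ we have
\[
f_y(x,y)=k\,c_k\,y^{k-1}+O(y^k),
\]
so $f_y$ has a fixed strict sign on each of the thin strips $0<y<\delta$ and $-\delta<y<0$ above and below $I$. By (6), $g_x$ then has a fixed weak sign on, say, the upper strip. Letting $y\to 0^+$ gives that $g_x(\cdot,0)$ has that same weak sign on all of $I$, so $h$ is monotone. Hence $h\equiv 0$, i.e.\ $g(I)\equiv 0$.
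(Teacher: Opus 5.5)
Your Steps 1 and 2 are correct. In Step 2 you do not even need a limit: $f_y(x,0)=c\neq 0$ on $I$ itself, so (6) fixes the sign of $g_x(\cdot,0)$ directly.

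\textbf{The gap is in Step 3, at its first iteration.} Suppose the lower coefficients vanish on $I$. The inequality $f(b,y)-f(a,y)=\frac{y^k}{k!}\bigl(c_k(b)-c_k(a)\bigr)+O(y^{k+1})\le 0$ forces $c_k(b)=c_k(a)$ only when $k$ is odd. When $k$ is even, $y^k>0$ on both sides of the axis, so you only get that $c_k$ is nonincreasing on $I$. And $k=2$ is exactly the case right after $c=0$.

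Concretely, take $F=(-xy^2,-y)$ on $I=[-1,1]\times\{0\}$. It satisfies (5), (6), (7), (8) and the standing hypotheses, and $f_y(\cdot,0)\equiv 0$. But $c_2(x)=-2x$ changes sign, and $f_y=-2xy$ takes both signs on every strip $0<y<\delta$ and on every strip $-\delta<y<0$. So the claim that $f_y$ has a fixed strict sign on a strip above $I$ is false in general, and monotonicity of $g(\cdot,0)$ cannot be extracted from it.

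The flat case is not handled either. ``Smooth enough'' is not analyticity: $f=-xe^{-1/y^2}$ (extended by $0$ on $y=0$) obeys every constraint on $f$ and has all $c_k\equiv 0$. Again $f_y$ changes sign on each strip over $[-1,1]$.

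\textbf{The missing idea} is to compare \emph{values} of $f$ using $f_x\le 0$, instead of controlling the sign of $f_y$ along a whole strip. This is also where the hypothesis that $f$ vanishes on no open set enters; your plan only invokes it, for $F$, in the flat case. The argument runs as follows.
\begin{enumerate}
\item If $g(\cdot,0)\not\equiv 0$ on $I$, it is not monotone. So there are interior points $p,q$ with $g_x(p,0)>0>g_x(q,0)$.
\item By continuity and (6), $f_y\le 0$ on a small box around $(p,0)$ and $f_y\ge 0$ on one around $(q,0)$.
\item Since $f=0$ on $I$, this gives $f(p,y)\le 0\le f(q,y)$ for $0<y<\delta$, and the reverse inequalities for $-\delta<y<0$.
\item If $p<q$, (5) gives $0\ge f(p,y)\ge f(x,y)\ge f(q,y)\ge 0$ for $p\le x\le q$ and $0<y<\delta$. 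So $f$ vanishes on an open rectangle, a contradiction. If $q<p$, the same happens below $I$.
\end{enumerate}
This is the comparison against (5) that the paper's proof makes with $f(x_4,y_3)\le 0<f(x_3,y_3)$. The second point $q$ is what justifies the sign and position of $(x_3,y_3)$, which the paper takes for granted. The argument needs only $C^1$ and covers every degenerate case at once.

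Finally, the nonvanishing hypothesis must be applied to $f$ itself, not merely to $F$ as in your fallback. Take $f\equiv 0$ and $g=1-x^2-y$ on $[-1,1]\times\{0\}$. This satisfies (5)--(8), and $F$ vanishes on no open set, yet $g(0,0)=1$.
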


\begin{proof}
Assume $g_{x}$ is not identically zero on $I$ then (8) implies that $g_{x}$ changes its sign in $I$. Let $x_{4}\in(x_{1},x_{2})$ s.t.
\begin{equation}
g_{x}(x_{4},0)>0
\end{equation}
(6) implies that there exists $\delta>0$ s.t.
\begin{equation}
f(x_{4},y)\le0 \quad \forall y \text{ such that } 0<y<\delta
\end{equation}
Since we assume that for any open set there exists a point that $f$ does not vanish there exist $x_{3}\in(x_{1},x_{2})$, $0<y_{3}<\delta$ s.t.
\begin{equation}
f(x_{3},y_{3})>0
\end{equation}
W.l.o.g $x_{4}<x_{3}$ Combining (10) and (11) we obtain:
\begin{equation}
f(x_{4},y_{3})\le0<f(x_{3},y_{3})
\end{equation}
which contradicts (5).
\end{proof}

\begin{lemma}
Let $y_{1}, y_{2}$ s.t. $y_{1}<y_{2}$. If $f$ vanishes on interval $\{0\}\times[y_{1},y_{2}]$ and $g$ vanishes on its edges then $g$ vanishes on the entire interval.
\end{lemma}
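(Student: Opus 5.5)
The proof will not mirror Lemma 1 but use a shorter monotonicity argument, because the derivative of $g$ along the segment now has a definite sign. In Lemma 1 the segment was horizontal, so the relevant derivative was $g_x$, which (5)--(6) leave unsigned; that is why the argument there needed $f\equiv 0$ and the non-vanishing assumption. Here the segment $J=\{0\}\times[y_1,y_2]$ is vertical, so the derivative of $g$ along $J$ is $g_y$. By (5), $g_y\le 0$ everywhere in the plane.

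First I will restrict $g$ to $J$ and set $h(y)=g(0,y)$ for $y\in[y_1,y_2]$. By the smoothness assumption on the right hand side, $h$ is differentiable with $h'(y)=g_y(0,y)\le 0$. The mean value theorem then shows that $h$ is nonincreasing on $[y_1,y_2]$.

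Next I will use the boundary data $h(y_1)=g(0,y_1)=0$ and $h(y_2)=g(0,y_2)=0$. Since $h$ is nonincreasing, every $y\in[y_1,y_2]$ satisfies
\[
0=h(y_1)\ \ge\ h(y)\ \ge\ h(y_2)=0 .
\]
Hence $h\equiv 0$, that is, $g$ vanishes on all of $J$.

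No step is a real obstacle. The only thing to check is that $g$ is differentiable along $J$, which the standing smoothness assumption provides. The hypothesis $f\equiv 0$ on $J$ is not needed for this argument. If one wanted a proof parallel to Lemma 1, one would swap the roles of $x$ and $y$ and invoke the sign condition (6) on $g_xf_y$. The monotonicity route avoids that and also avoids the assumption that $F$ does not vanish on open sets.
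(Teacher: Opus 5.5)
Your proof is correct and is exactly what the paper means by its one-line proof ``obtained immediately from (5)'': the condition $g_y\le 0$ makes $y\mapsto g(0,y)$ nonincreasing on $[y_1,y_2]$, so vanishing at both endpoints forces it to vanish throughout. Your remark that the hypothesis $f\equiv 0$ on the segment is never used is also accurate.
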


\begin{proof}
This lemma is obtained immediately from (5).
\end{proof}

\begin{remark}
Lemma 1 and 2 are valid for replacing $f$ with $g$ and $x$ axis with $y$ axis.
\end{remark}

\begin{lemma}
Let the system (3--4) satisfy (5--6). Assume $a, b$ are two distinct zeros of $F$. If $f$ and $g$ do not change their signs in the entire plane, then there exists a curve $\gamma$ such that the following exist:
\begin{enumerate}
    \item $a \in \gamma$.
    \item $F(\gamma) \equiv 0$.
    \item There exists $c \in \gamma$ s.t. $d(c,b) < d(a,b)$, where $d$ is the Euclidean planar metric.
\end{enumerate}
\end{lemma}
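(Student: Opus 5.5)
By the monotonicity assumptions (5), each zero of $F$ forces $f$ and $g$ to vanish along an axis-parallel ray from that zero. Two such rays, one coming from $a$ and one from $b$, should then intersect. Lemma 1 and Lemma 2 (with the Remark) would then upgrade ``$f\equiv 0$ on a segment with $g$ vanishing at its ends'' to $F\equiv 0$ on that segment, which I take as $\gamma$.

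\textbf{Step 1 (normalisation).} The reflection $x\mapsto -x$ with $\tilde f(x,y)=-f(-x,y)$, $\tilde g(x,y)=g(-x,y)$ preserves (5)--(6): $\tilde f_x=f_x\le 0$, $\tilde g_y= g_y\le 0$, and $\tilde g_x\tilde f_y=g_xf_y\le 0$. It flips the sign of $f$ only, and the analogous reflection in $y$ flips the sign of $g$ only. Reflections are isometries, so conditions 1--3 are preserved. Hence w.l.o.g. $f\ge 0$ and $g\ge 0$ on $\mathbb{R}^2$.

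\textbf{Step 2 (rays).} Write $a=(a_1,a_2)$ and $b=(b_1,b_2)$. Since $f\ge 0$ and $f_x\le 0$, from $f(a)=0$ we get $f(x,a_2)=0$ for all $x\ge a_1$. Similarly $g\ge 0$ and $g_y\le0$ give $g(a_1,y)=0$ for all $y\ge a_2$. The same rays emanate from $b$.

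\textbf{Step 3 (case analysis).}
\begin{itemize}
\item Case $b_1\ge a_1$, $b_2\le a_2$. The horizontal ray from $a$ meets the vertical ray from $b$ at $q=(b_1,a_2)$, where $f(q)=g(q)=0$. On $[a,q]$ we have $f\equiv0$ and $g$ vanishing at both ends, so Lemma 1 gives $g\equiv 0$ there. Take $\gamma=[a,q]$ and $c=q$. Then $d(c,b)=|a_2-b_2|<d(a,b)$ unless $b_1=a_1$. If $b_1=a_1$, the point $b$ lies on the vertical ray from $a$ (since $b_2\le a_2$ and $b\ne a$ put it below, this sub-case is really handled by the next case).
\item Case $b_1\le a_1$, $b_2\ge a_2$. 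This is symmetric: use $q'=(a_1,b_2)$, the vertical segment $[a,q']$ on which $g\equiv0$, and Lemma 2 via the Remark to get $f\equiv 0$. If $a_1=b_1$ then $q'=b$ and $c=b$ works.
\end{itemize}
The degenerate situation where $b$ lies on one of $a$'s rays (or $a$ on one of $b$'s) is covered by taking $\gamma$ to be the segment $[a,b]$ itself and applying Lemma 1 or Lemma 2.

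\textbf{Step 4, the main obstacle.} The remaining cases are $b$ strictly north-east of $a$ or strictly south-west of $a$. There the rays from $a$ and $b$ do not cross. I would argue by contradiction, in the style of Lemma 1. Suppose $b_1>a_1$ and $b_2>a_2$. Along the horizontal ray from $a$, $f$ attains its minimum $0$, so $f_y=0$ there. Similarly $g_x=0$ on the vertical ray through $b$.

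I would look at the rectangle $R$ with opposite corners $a$ and $b$. Its bottom side lies in $\{f=0\}$ and its left side in $\{g=0\}$; the same holds for $b$'s rays, with the top side extended to the right and the right side extended upward. The aim is to show that $f$ must vanish on the vertical side $\{b_1\}\times[a_2,b_2]$. The idea is to use monotonicity in $x$ from the point $(a_1,b_2)$, together with the sign condition $g_xf_y\le 0$ and the assumption that $F$ does not vanish on open sets, to rule out a point where $f>0$. Once $f\equiv 0$ on that side is known, Lemma 2 (with the Remark) gives $g\equiv0$ there too. Chaining the bottom side with this vertical side gives an L-shaped zero curve from $a$ through $(b_1,a_2)$ to $b$, and we take $c=b$.

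If this direct argument fails, I would instead show that this configuration cannot occur for two isolated zeros. Since this step uses the sign hypotheses in an essential way and is not a formal consequence of Lemmas 1--2, I expect it to be the crux of the proof.
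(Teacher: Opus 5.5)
Your Steps 1--3 are fine given Lemma 1. They in fact cover configurations the paper passes over when it simply writes ``Assume $r,w$ are positive''. The genuine gap is Step 4, which you leave open. The north-east configuration $b_1>a_1$, $b_2>a_2$ is exactly the one case the paper's proof treats, so the part of the lemma with real content is unproved.

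Moreover, the route you sketch aims at the wrong corner. Even granting $f\equiv 0$ on the right side $\{b_1\}\times[a_2,b_2]$, Lemma 2 needs $g$ to vanish at \emph{both} ends of that side, and $g(b_1,a_2)=0$ is not available. You only know that $g$ vanishes on the upward vertical rays from $a$ and from $b$, and $g\ge 0$, $g_y\le 0$ give merely $g(b_1,a_2)\ge g(b)=0$. For the same reason the bottom side lies in $\{f=0\}$ but is not known to lie in $\{g=0\}$; Lemma 1 would also need $g(b_1,a_2)=0$. So the bottom side cannot be chained into a zero curve. Note too that the south-west case is not the mirror image of the north-east one: $\gamma$ must contain $a$, which is then the north-east corner of the rectangle. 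The paper does not treat that case either.

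The idea you are missing is to work at the north-west corner $(a_1,b_2)$, which lies on $a$'s vertical ray and so has $g=0$ for free. The paper splits according to whether $f(a_1,b_2)=0$, equivalently (by $f\ge 0$, $f_x\le 0$) whether $f\equiv 0$ on the top side.

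\emph{If so:} Lemma 1 with $g(a_1,b_2)=g(b)=0$ gives $g\equiv 0$ on the top side, and the Remark gives $f\equiv 0$ on the left side. The left side followed by the top side is then a zero path from $a$ to $b$, and $c=b$ works.

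\emph{If not:} $f(x,b_2)>0$ for $x$ near $a_1$, and the paper uses the graph of $h(x)=\inf\{y\in[a_2,b_2]: f_y(x,y)>0\}<b_2$. The steps are as follows.
Below the graph $f_y\le 0$, so $f\equiv 0$ there, because $f\ge 0$ vanishes on the bottom side.
The function $h$ is nondecreasing, by $f\ge 0$ and $f_x\le 0$.
Just above the graph $f_y>0$, so (6) and continuity give $g_x\le 0$ on it.
Since $g\ge 0$, $g_y\le 0$ and $g=0$ at the left end (on $a$'s vertical ray), the paper concludes $g\equiv 0$ along the graph.
Together with the piece of $a$'s vertical ray below $h(a_1)$, this gives a zero curve from $a$ inside the rectangle. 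Every point of the rectangle other than $a$ is strictly closer to $b$. So (6) is exploited along the upper boundary of $\{f=0\}$, not on a side of the rectangle.

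As for your fallback, read the nondegeneracy hypothesis as the proof of Lemma 1 does: neither $f$ nor $g$ vanishes identically on an open set. Under that reading the same mechanism degenerates to $h\equiv a_2$ and needs no isolation assumption. You should not add one, since the lemma does not assume it. For each $x\ge a_1$, $f(x,\cdot)$ cannot vanish on any $[a_2,a_2+\epsilon]$, or else $f\equiv 0$ on $[x,\infty)\times[a_2,a_2+\epsilon]$. Hence $f_y>0$ at points accumulating at $(x,a_2)$, so $g_x(x,a_2)\le 0$. Then $g(\cdot,a_2)\equiv 0$ on $[a_1,\infty)$, and $g\equiv 0$ on the quadrant $[a_1,\infty)\times[a_2,\infty)$, a contradiction. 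So under that reading $F$ has no zero at all when $f,g\ge 0$, and the lemma holds vacuously.
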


\begin{proof}
Let $f, g$ be nonnegative in the entire plane. Let $a=(0,0)$ and $b=(r,w)$. Assume $r, w$ are positive. (5) and the positivity of $f$ and $g$ imply that $f(x_{0},y_{0})=0 \Rightarrow \forall x > x_{0}, f(x,y_{0})=0$ and $g(x_{0},y_{0})=0 \Rightarrow \forall y > y_{0}, g(x_{0},y)=0$. We denote:
\begin{align}
r^{1} &= [0,r]\times\{0\} \\
r^{2} &= \{r\}\times[0,w] \\
l^{1} &= \{0\}\times[0,w] \\
l^{2} &= [0,r]\times\{w\}  
 \end{align}
Since $f, g$ are nonnegative in the entire plane we obtain
\begin{equation}
f(r^{1})=g(l^{1})=0
\end{equation}
The positivity of $f$ and (13) imply
\begin{equation}
f_{y}(r^{1})=0
\end{equation}
If $f(l^{2})\equiv0$ then by (13) and Lemma 1 we obtain that $l^{1}\cup l^{2}$ is a path of fixed points hence we obtain the required. Assume that $f$ is not identically zero in $l^{2}$. (5) and the positivity of $f$ imply that there exists $\delta>0$ s.t.
\begin{equation}
f(x,w)>0 \quad \text{such that } 0\le x\le\delta
\end{equation}
We define a function $h$ in the interval $[0,\delta]$:
\begin{equation}
h(x)=\inf\{y \mid 0\le y\le w \wedge f_{y}(x,y)>0\}
\end{equation}
Positivity of $f$, the definition of $h$ and (15) imply:
\begin{equation}
0\le h(x)<w \quad \forall x \text{ such that } 0\le x\le\delta
\end{equation}
Hence $h$ is bounded in $[0,\delta)$. Positivity of $f$ and (5) imply that $h$ is monotone. Thus one can see that there exists $0<\delta_{1}<\delta$ s.t. $h$ is continuous in $[0, \delta_{1}]$. Let $\Gamma$ be the graph of $h$ in $[0, \delta_{1}]$. Positivity of $f$ and (14) imply:
\begin{equation}
f(\Gamma)\equiv0
\end{equation}
(13) implies
\begin{equation}
g(0,h(0))=0
\end{equation}
The definition of $h$ and (6) imply that
\begin{equation}
g_{x}(\Gamma)\le0
\end{equation}
Since $r, w$ are positive, $h$ is monotone, $g$ is positive and by (19) we obtain:
\begin{equation}
g(\Gamma)\equiv0
\end{equation}
Hence $\Gamma$ is the required path.
\end{proof}

\begin{lemma}
Let a system (3--4) satisfy (5--6). If $f_{y}$ does not change its sign in the entire plane, then between two distinct zeros of the system there exists another zero.
\end{lemma}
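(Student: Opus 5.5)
We plan to treat the case $f_{y}\ge 0$ in the whole plane; the case $f_{y}\le 0$ then follows by the reflection $y\mapsto -y$. This reflection preserves (5--6) and flips the sign of $f_y$. With $f_y\ge0$, condition (6) forces $g_{x}\le 0$ wherever $f_{y}>0$. Combined with $f_{x}\le 0$ and $g_{y}\le 0$, the Jacobian then has the sign pattern of a system without positive feedback: $f$ is nonincreasing in $x$ and nondecreasing in $y$.

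Let $a=(x_a,y_a)$ and $b=(x_b,y_b)$ be distinct zeros of $F$. The first step is to locate $b$ relative to $a$ using the monotonicity of $f$. If $x_b>x_a$ and $y_b<y_a$, then $f$ decreases weakly along the segment from $a$ to $(x_b,y_a)$ and again along the segment from there down to $b$. Since $f$ vanishes at both endpoints, $f\equiv 0$ on both segments. Lemma 1 and Lemma 2, together with the Remark after them, then give $g\equiv0$ on each segment, provided $g$ vanishes at the corner $(x_b,y_a)$. This situation therefore gives a continuum of zeros, and in particular another zero between $a$ and $b$.

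For the remaining configuration ($x_b>x_a$, $y_b\ge y_a$), consider the rectangle $R$ with opposite corners $a$ and $b$. We will look at the level set $\{f=0\}\cap R$. Because $f$ is monotone in each variable, this set is the region between two monotone nondecreasing graphs $y=h(x)$, built as in the proof of Lemma 3 by taking an infimum, and it contains both $a$ and $b$. Along such a graph $\Gamma$ joining $a$ to $b$, consider $g$ restricted to $\Gamma$. Its derivative along $\Gamma$ is $g_x+g_y h'$. Since $g_x\le0$ on $\Gamma$ wherever $f_y>0$, and $g_y\le0$ with $h'\ge0$, the function $g$ is nonincreasing along $\Gamma$. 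It vanishes at both ends, so it vanishes identically on $\Gamma$. This gives a curve of zeros through $a$ and $b$, and hence a zero between them, which is the analogue of the conclusion of Lemma 3.

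The main obstacle is the regularity of $\Gamma$. The function $h$ need only be monotone, and at points where $f_y=0$ on $\Gamma$ the sign of $g_x$ is not controlled by (6). We will handle this as in Lemma 3. We first take a subinterval on which $h$ is continuous, using that monotone functions have only countably many jumps. We then bridge each jump by a vertical segment, on which $f\equiv0$, and apply Lemma 2 there to propagate $g=0$. At the points where $f_y=0$, we use the standing assumption that $F$ does not vanish on open sets, arguing as in Lemma 1, to rule out $g_x>0$ on a set of positive length. The corner case in the first step, where we need $g(x_b,y_a)=0$, will be settled by the same monotonicity argument applied to $g$ along the two edges of the rectangle.
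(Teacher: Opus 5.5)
Your route differs from the paper's. For $b$ to the upper right of $a$, the paper uses monotone paths for $g$ when $g_x\le 0$. Otherwise it applies the intermediate value theorem to $f$ on a path from $(x_1,0)$ to $(0,y_1)$, which yields a point with $f(c)=0$ but not by itself a zero of $F$. You instead carry over the mechanism of Lemma 3: a monotone curve $\Gamma\subset\{f=0\}$ from $a$ to $b$ along which $g$ is nonincreasing. That does produce a curve of zeros of $F$, so the strategy is sound. Two steps, however, fail as you state them.

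\emph{The corner case.} In the configuration $x_b>x_a$, $y_b<y_a$, monotonicity along the two edges does not settle the corner value. Once $g_x\le 0$ is known on the top edge you get $g(x_b,y_a)\le g(a)=0$. On the right edge $g_y\le 0$ gives $g(x_b,y_a)\le g(b)=0$. That is the same inequality twice. What monotonicity of $f$ does give is much stronger: for every $(x,y)\in[x_a,x_b]\times[y_b,y_a]$,
$0=f(x_b,y_b)\le f(x,y_b)\le f(x,y)\le f(x,y_a)\le f(x_a,y_a)=0$.
So $f$ vanishes on a rectangle with interior, which contradicts the standing hypothesis (applied to $f$ itself, as in the proof of Lemma 1). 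This configuration therefore never occurs. The case $x_a=x_b$, which you omit, is immediate: $f_y\ge 0$ and $g_y\le 0$ force $F\equiv 0$ on the vertical segment.

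\emph{The ``main obstacle''.} Your resolution is mis-aimed. A local argument in the style of Lemma 1 cannot exclude $g_x>0$ along a vertical stretch of $\Gamma$. Near the origin, $f=-x$, $g=x-y$ satisfies (5--6), neither function vanishes on an open set, and $g_x\equiv 1$ on the zero line of $f$. What the local argument does give, and what suffices, is this:
\begin{itemize}
\item If $g_x(p)>0$, then $g_x>0$ on a disc $U$ about $p$, so (6) gives $f_y\le 0$ on $U$, hence $f_y\equiv 0$ there.
\item So $f$ depends on $x$ alone in $U$. Having no open zero set, it vanishes in $U$ only on the vertical line through $p$.
\item Hence $\Gamma$ is locally vertical wherever $g_x>0$, and there only $g_y\le 0$ enters.
\end{itemize}
To make ``$g$ is nonincreasing along $\Gamma$'' rigorous, parametrize $\Gamma$ by $s=x+y$. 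Both coordinates are then nondecreasing and $1$-Lipschitz, so $G(s)=g(\gamma(s))$ is Lipschitz. Moreover $G'=g_x x'+g_y y'\le 0$ a.e., because $x'=0$ on the open set where $g_x(\gamma(s))>0$.

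Two further points in that step need changing:
\begin{itemize}
\item Differentiating along $y=h(x)$ with $h$ merely monotone does not integrate correctly, since $h$ may have jumps and a singular part.
\item Invoking Lemma 2 on the jump segments is circular, because you do not yet know $g=0$ at their ends. It is also unnecessary, since $g$ is monotone along vertical pieces anyway.
\end{itemize}
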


\begin{proof}
Let $a, b$ be zeros such that $a=(0,0)$ and $b=(x,y)$. If $y=0$ then (5) and the monotonicity of $g$ imply that the segment $[0,x]\times\{0\}$ is a line of zeros. In the same way we can show that for $x=0$ we obtain that the segment $\{0\}\times[0,y]$ is a line of zeros. Assume $x, y$ are positive. If $g_{x}$ is not positive in the entire plane then it vanishes in the entire rectangle $(0,0), (x,0), (x,y), (0,y)$. Let $0<x_{1}<x$, $0<y_{1}<y$, we obtain from (5) and (6) that
\begin{equation}
f(x_{1},0)<0
\end{equation}
\begin{equation}
f(0,y_{1})>0
\end{equation}
Let $l$ be the path from $(x_{1},0)$ to $(0, y_{1})$ via the first quadrant. (22--23) imply that there exists $c\in l$ s.t. $f(c)=0$ as required.
\end{proof}

\begin{theorem}
Let system (3--4) satisfy (5--6). Assume that one of the following exists:
\begin{enumerate}
    \item $f$ and $g$ do not change their signs in the entire plane.
    \item $f_{y}$ does not change its sign in the entire plane.
\end{enumerate}
If the system has an isolated zero, then it is unique.
\end{theorem}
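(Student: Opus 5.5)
The plan is to argue by contradiction and reduce both cases of the Theorem to Lemma 3 and Lemma 4 respectively. Suppose $a$ is an isolated zero of $F$ and that $F$ has a second zero $b\neq a$. Isolation means there is an $\epsilon>0$ such that $a$ is the only zero of $F$ in the open disc $B(a,\epsilon)$. In each case we will produce a zero of $F$ other than $a$ inside this disc, which contradicts isolation.

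\emph{Case 1.} Assume $f$ and $g$ do not change their signs in the plane. Apply Lemma 3 to the pair $a,b$. This gives a curve $\gamma$ with $a\in\gamma$, $F(\gamma)\equiv 0$, and a point $c\in\gamma$ with $d(c,b)<d(a,b)$, so $c\neq a$. I expect $\gamma$ to be connected, as in the construction of Lemma 3, where $\gamma$ is either $l^{1}\cup l^{2}$ or the graph $\Gamma$ of a continuous function. Then $\gamma$ is a connected set of zeros that contains $a$ and at least one other point. Such a set must meet $B(a,\epsilon)\setminus\{a\}$, and every point of that intersection is a zero different from $a$. This contradicts isolation. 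The only care needed is the reduction to the normalized situation of Lemma 3 (nonnegative $f,g$, $a$ at the origin, $b$ in the first quadrant). This is done by translating, and by applying the reflections $x\mapsto -x$, $y\mapsto -y$, and changes of sign of $f$ or $g$. One must check that each such change preserves conditions (5)--(6). For instance, reflecting $x$ together with negating $f$ keeps $f_x\le 0$ and keeps the sign of the product $g_xf_y$.

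\emph{Case 2.} Assume $f_y$ does not change sign. Here I would iterate Lemma 4. Set $b_0=b$. Lemma 4 gives a zero $b_1$ between $a$ and $b_0$. Its proof places this zero on the segment when $a,b$ are horizontally or vertically aligned, and otherwise inside the rectangle spanned by $a$ and $b_0$. Next apply Lemma 4 to the pair $a,b_1$, and continue. The goal is that the zeros $b_n\neq a$ accumulate at $a$, for example with $d(a,b_n)\to 0$. This contradicts isolation. In the degenerate sub-cases (a line of zeros, or $g_x$ vanishing on a rectangle) one obtains a whole segment or rectangle of zeros touching $a$, and isolation fails at once.

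The main obstacle is Case 2: Lemma 4 only asserts that the new zero lies ``between'' $a$ and $b$, not that it is close to $a$. To force convergence I would first try to refine the choice in Lemma 4. Taking $x_1,y_1$ arbitrarily small in (22)--(23) makes the path $l$ from $(x_1,0)$ to $(0,y_1)$ lie within any prescribed distance of $a$. Then a zero of $f$ on $l$, combined with the sign information on $g$ coming from (5)--(6), should yield a zero of $F$ within $\epsilon$ of $a$ directly, with no iteration needed. If this fails, I would fall back on a compactness argument: the zeros in the closed rectangle form a compact set, so pick the zero $b^{*}\neq a$ nearest to $a$. Lemma 4 applied to $a,b^{*}$ then produces a strictly closer zero, which is a contradiction.
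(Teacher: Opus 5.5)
Your proposal is correct and follows the paper's route: the paper's proof only cites Lemma 3 for part 1 and Lemma 4 for part 2. Your connectedness argument for $\gamma$ and your nearest-zero compactness argument supply the step from those lemmas to the failure of isolation, which the paper leaves implicit. (Use the compactness fallback; the refinement of Lemma 4 you try first only yields a zero of $f$, not of $F$.) Do not rely on your side remark about normalizing Lemma 3 by reflections, though. Writing $a=(a_1,a_2)$ and $b=(b_1,b_2)$, flipping $x$ forces negating $f$ and flipping $y$ forces negating $g$, since a standalone sign change violates (5). So the sign of $f$ times the sign of $b_1-a_1$ is unchanged, and likewise for $g$ and $b_2-a_2$. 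A configuration such as $f,g\ge 0$ with $b$ above and to the left of $a$ therefore cannot be brought into Lemma 3's normalized form; just invoke Lemma 3 as stated for arbitrary distinct zeros, as the paper does.
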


\begin{proof}
Part 1 is an immediate result of Lemma 3. Part 2 is proved by Lemma 4.
\end{proof}

\begin{remark}
In many applications one has a qualitative knowledge about the right hand side functions such as their signs and their dependencies on the independent variables. However, often these functions are not explicitly given or known. Since it ensures a unique isolated zero for given qualitative behaviors, Theorem 1 may become useful in such cases.
\end{remark}

\section{Periodic Solutions}
The following section studies possible asymptotic behavior of periodic orbit solutions of system (3--4).

\begin{lemma}
If system (3--4) has a periodic solution, then it surrounds a divergence free domain.
\end{lemma}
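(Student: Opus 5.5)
The plan is a Bendixson-type flux argument: combine Green's theorem with the sign conditions (5). Let $\Gamma$ be a nonconstant periodic orbit of (3--4), with period $T>0$. By the Jordan curve theorem it bounds a bounded open region $D$, and we orient $\Gamma$ positively. The claim to prove is that the divergence $\operatorname{div}F=f_x+g_y$ vanishes identically on $\overline{D}$. In fact we will get more: $f_x\equiv g_y\equiv 0$ there.

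First, compute the outward flux of $F$ across $\Gamma$. Parametrize $\Gamma$ by the solution $t\mapsto (x(t),y(t))$, $t\in[0,T]$, so that $dx=f\,dt$ and $dy=g\,dt$ along $\Gamma$. The flux is then
\begin{equation*}
\oint_{\Gamma} f\,dy-g\,dx=\pm\int_0^T \bigl(f g - g f\bigr)\,dt = 0 ,
\end{equation*}
where the sign depends only on whether the flow traverses $\Gamma$ clockwise or counterclockwise. In either case the integral is zero, because $F$ is tangent to $\Gamma$.

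Second, apply Green's theorem (divergence form) on $D$. This is legitimate since $f,g$ are assumed smooth and $\Gamma$ is a $C^1$ simple closed curve; $\Gamma$ contains no zero of $F$, being a nonconstant orbit. It gives
\begin{equation*}
\iint_{D}\bigl(f_x+g_y\bigr)\,dx\,dy=\oint_{\Gamma} f\,dy-g\,dx=0 .
\end{equation*}
Third, use the absence of positive circuits. By (5) we have $f_x\le 0$ and $g_y\le 0$ everywhere, so the integrand is a continuous nonpositive function. A continuous nonpositive function with zero integral over an open set vanishes identically there. Hence $f_x+g_y\equiv0$ on $D$, and by continuity on $\overline{D}$. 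Since both summands are nonpositive, each one vanishes separately: $f_x\equiv g_y\equiv 0$ on $\overline{D}$. Thus $D$ is a divergence free domain surrounded by the periodic solution, as claimed.

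I expect the main obstacle to be regularity rather than analysis. We need $\Gamma$ to be a rectifiable Jordan curve so that Green's theorem applies. This follows from uniqueness of solutions: $F$ is at least $C^1$, so a periodic orbit does not self-intersect, and it is a $C^1$ curve. We also need the step ``zero integral of a nonpositive continuous function implies it vanishes,'' which is standard. If one wanted to work only with $F\in C^1$, I would approximate $D$ from inside by smooth domains, or invoke Green's theorem for $C^1$ Jordan curves directly.
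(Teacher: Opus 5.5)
Your proposal is correct and follows the paper's proof: Green's theorem turns the divergence integral over the enclosed region into the flux $\int_0^T (fg-gf)\,dt=0$, and the sign condition (5) then forces the nonpositive divergence to vanish identically there. Your additions (the orientation sign, the continuity argument, and the observation that $f_x\equiv g_y\equiv 0$ separately) only make explicit what the paper leaves implicit.
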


\begin{proof}
Let $\Gamma$ be a $T$-periodic solution of (3--4). Green's theorem implies
\begin{equation}
\int_{x}\int_{y}\nabla\cdot F(x,y)dxdy=\oint_{\Gamma}fdy-gdx=\int_{0}^{T}(f\dot{y}-g\dot{x})dt=\int_{0}^{T}(fg-gf)dt\equiv0.
\end{equation}
On the other hand (5) implies that the divergence does not change its sign, so it must be identically zero in the surrounded domain.
\end{proof}

\begin{corollary}
If a periodic solution of (3--4) is a limit cycle, then it attracts (respectively repels) only exterior trajectories.
\end{corollary}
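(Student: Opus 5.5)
The plan is to combine Lemma 5 with the divergence condition (5) and argue by contradiction. Let $\Gamma$ be a limit cycle of (3--4) and let $D$ be the open domain it bounds. By Lemma 5, $\nabla\cdot F=f_x+g_y\equiv 0$ on $D$. By (5) both terms are nonpositive, so in fact $f_x\equiv 0$ and $g_y\equiv 0$ throughout $D$. Hence $F$ is divergence free inside $\Gamma$, and the flow $\phi_t$ preserves Lebesgue area in $D$ (Liouville's theorem). Note also that $D$ is invariant, since trajectories cannot cross $\Gamma$.

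Now suppose, for contradiction, that $\Gamma$ attracts interior trajectories. Then there is an interior point $p\in D$ near $\Gamma$ whose $\omega$-limit set is $\Gamma$. By continuity of the flow and the Poincaré--Bendixson structure near a limit cycle, there is a thin interior annulus $A\subset D$ adjacent to $\Gamma$ with the following property: the return map on a transversal section is a contraction toward $\Gamma$ on the inner side. Consequently $\phi_T(A)\subsetneq A$, where $T$ is the period of $\Gamma$ (or we can take $A$ bounded by a piece of orbit and a transversal segment, the standard trapping region). Then $\mathrm{area}(A\setminus\phi_T(A))>0$. This contradicts area preservation, since $\phi_T(A)\subset A$ would force equal areas. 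The repelling case is the same argument with time reversed, $t\mapsto -t$, which keeps the divergence zero.

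It follows that the limit cycle's attracting (respectively repelling) behaviour can only involve exterior trajectories: interior orbits near $\Gamma$ are neither attracted nor repelled. In fact a cleaner version of the same argument shows they are closed. The Poincaré return map $P$ on an interior transversal is monotone. If $P(s)\neq s$ at some point, an annulus bounded by the orbit arc from $s$ to $P(s)$ and the transversal piece would be mapped strictly into itself (or strictly out of itself), again violating area conservation. So $P$ is the identity and interior orbits near $\Gamma$ are periodic.

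The main obstacle is making the trapping-region construction rigorous. One must check that the region bounded by the orbit arc from $s$ to $P(s)$, the transversal segment, and $\Gamma$ itself is mapped into a proper subset of itself with a positive-area defect. Care is also needed because the paper only assumes $F$ is "smooth enough" and does not vanish on open sets. Nothing in the argument uses isolatedness of zeros, so I would rely on the standard flow-box and return-map facts for $C^1$ planar fields.
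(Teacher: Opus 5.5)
Your proof is correct and follows the paper's route: Lemma 5 makes the interior domain $D$ divergence free, and that rules out any interior trajectory having $\Gamma$ as its $\omega$- (resp.\ $\alpha$-) limit. The paper only cites ``the theory of ODE'' for the absence of limit cycles in a divergence-free domain, which strictly does not cover $\Gamma$ itself since $\Gamma=\partial D\not\subset D$; your Liouville area-preservation argument, applied to the trapping region bounded by an orbit arc and a transversal segment, actually proves the one-sided statement that is needed.
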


\begin{proof}
Let $\Gamma$ be a periodic solution of (3--4) which is a limit cycle. Lemma 5 implies that $\Gamma$ surrounds a divergence free domain $D$. The theory of ODE ([4, 10]) implies that there are no limit cycles in $D$. Thus $\Gamma$ is not properly contained in $D$ and there is no trajectory in $D$ that has $\Gamma$ as its $\omega$ (resp. $\alpha$) limit. Hence it is a limit cycle only of exterior trajectories in a neighborhood which is not divergence free.
\end{proof}

\begin{lemma}
If a periodic solution of (3--4) is a limit cycle then it is stable.
\end{lemma}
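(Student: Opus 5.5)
The approach is to show that, on the exterior side, the cycle cannot be repelling, because the divergence there is nonpositive. By Corollary 1 we may restrict attention to the exterior side of the limit cycle $\Gamma$: no interior trajectory has $\Gamma$ as an $\omega$- or $\alpha$-limit. So it suffices to show that exterior trajectories near $\Gamma$ are attracted to it, i.e. that $\Gamma$ cannot repel from outside.

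The key tool is the standard Poincaré map and characteristic exponent. Parametrize $\Gamma$ by the $T$-periodic solution $\phi(t)$ and take a transversal section $\Sigma$ at a point of $\Gamma$. For a point $p\in\Sigma$ on the exterior side, close to $\Gamma$, let $P(p)$ be its first return to $\Sigma$. By Liouville's formula, the derivative of the return map at $\Gamma$ is
\begin{equation*}
P'(0)=\exp\Big(\int_{0}^{T}\nabla\cdot F(\phi(t))\,dt\Big).
\end{equation*}
By (5), $\nabla\cdot F=f_{x}+g_{y}\le0$ everywhere, so $P'(0)\le1$.

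If $P'(0)<1$, then $\Gamma$ is hyperbolic and attracting on both sides, so it is stable.

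If $P'(0)=1$, I would use a Green's theorem argument instead, in the spirit of Lemma 5. Suppose, for contradiction, that $\Gamma$ repels on the exterior side. Take the exterior trajectory through $p$ and follow it \emph{backward} in time: since $\Gamma$ repels from outside, it is the $\alpha$-limit of this trajectory. Let $q\in\Sigma$ be the next backward crossing, which lies strictly between $\Gamma$ and $p$. Consider the annular region $A$ bounded by $\Gamma$ and by the closed curve $C$ formed from the orbit arc from $q$ to $p$ together with the segment $[q,p]$ of $\Sigma$. Applying Green's theorem to $A$:
\begin{itemize}
\item the flux of $F$ through $\Gamma$ and through the orbit arc vanishes;
\item the flux through the segment of $\Sigma$ has a definite sign, because $\Sigma$ is transversal.
\end{itemize}
For an expanding annulus this flux is outward, so $\iint_{A}\nabla\cdot F>0$. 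This contradicts $\nabla\cdot F\le0$. Hence exterior trajectories cannot move away from $\Gamma$. Since $\Gamma$ is assumed to be a limit cycle, they must approach it, and $\Gamma$ is stable from the exterior.

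The main obstacle is this sign bookkeeping. I need to check carefully that the flux through the transversal segment is positive exactly when the cycle is exterior-repelling, taking account of orientation and of whether $\Gamma$ is the inner boundary of $A$. A subtler point is that Corollary 1 only says the interior trajectories do not converge to $\Gamma$. So "stable" should be read as exterior (one-sided) orbital stability, with interior orbits staying nearby because the interior is foliated by periodic orbits (divergence-free, no limit cycles). I would state explicitly that this is the notion of stability proved.
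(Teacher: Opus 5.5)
Your proof is correct and follows the paper's outline: reduce to the exterior side via Corollary 1, then use $\nabla\cdot F\le 0$ to exclude exterior repulsion. The decisive step, however, is done differently.

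The paper simply asserts that (5) makes the total divergence in an exterior neighbourhood negative, and cites [4, 10] to conclude that trajectories approach $\Gamma$. You instead apply Green's theorem to the annulus between $\Gamma$ and one turn of an exterior orbit, closed off by a piece of $\Sigma$. This makes the argument self-contained. It also needs only $\nabla\cdot F\le 0$: the strict inequality comes from the flux through the transversal segment. The paper instead relies on a strict negativity of the divergence that (5) does not supply.

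Your sign bookkeeping works out. Let $s$ be the coordinate on $\Sigma$, with $s=0$ on $\Gamma$ and increasing outward. Near $\Sigma$, the annulus $A$ is $\{0<s<s_q\}$ on the side into which orbits cross, and $\{0<s<s_p\}$ on the side from which they arrive. So points with $s_q<s<s_p$ are crossed from inside $A$ to outside, and the flux there is outward, as you claimed.

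One simplification: the Liouville branch is vacuous. Lemma 5 gives $\nabla\cdot F\equiv 0$ on the region enclosed by $\Gamma$, hence on $\Gamma$ by continuity, so $P'(0)=1$ always. A hyperbolic attracting cycle would in any case contradict Corollary 1, since it would attract interior orbits too. You can go straight to the flux argument.
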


\begin{proof}
Let $\Gamma$ be a periodic solution of (3--4). Since it is periodic, it contains no zero of $F$, hence it is not approached by a zero sequence. Thus there is an exterior neighborhood of $\Gamma$ that contains no zero of $F$. Since it is a limit cycle, Corollary 1 implies that there is an exterior neighborhood of $\Gamma$ that trajectories approach it as $t$ increases (respectively decreases). (5) implies that in this neighborhood the total divergence is negative. Hence there exists a trajectory that approaches $\Gamma$ as $t$ increases. Thus ([4, 10]) all the trajectories approach $\Gamma$ as $t$ increases hence it is stable.
\end{proof}

\section{Fixed Points}
This section studies the possible asymptotic behaviors of fixed points. We assume that the fixed points are isolated and restrict our study to the cases where the Jacobian matrix at the fixed point contains at least one non-vanishing entry.

\begin{lemma}
Any hyperbolic fixed point of system (3--4) is stable.
\end{lemma}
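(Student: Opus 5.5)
The plan is to reduce the statement to linear algebra on the Jacobian at the fixed point and then apply the linearization theorem. Let $p$ be a hyperbolic fixed point of (3--4) and write the Jacobian there as
\begin{equation*}
J=\begin{pmatrix} a & b\\ c & d\end{pmatrix},\qquad a=f_{x}(p),\; b=f_{y}(p),\; c=g_{x}(p),\; d=g_{y}(p).
\end{equation*}
The no-positive-circuit assumptions (5) and (6), evaluated at $p$, give $a\le 0$, $d\le 0$ and $bc\le 0$. The eigenvalues of $J$ are the roots of $\lambda^{2}-(\operatorname{tr}J)\lambda+\det J=0$, so I will determine the signs of the trace and determinant and read off stability from them.

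First, the trace is $\operatorname{tr}J=a+d\le 0$. Second, the determinant is $\det J=ad-bc$. Here $ad\ge 0$ because $a$ and $d$ are both nonpositive, and $-bc\ge 0$ by (6). Hence $\det J\ge 0$. In particular $\det J<0$ is impossible, so the conditions (5--6) already exclude saddle points.

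Next I use hyperbolicity, meaning that no eigenvalue lies on the imaginary axis, to turn both weak inequalities into strict ones. If $\det J=0$, then $\lambda=0$ is an eigenvalue, which contradicts hyperbolicity, so $\det J>0$. If $\operatorname{tr}J=0$ while $\det J>0$, the eigenvalues are $\pm i\sqrt{\det J}$, which are purely imaginary and again contradict hyperbolicity. Therefore $\operatorname{tr}J<0$ and $\det J>0$.

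These two facts imply that both eigenvalues have negative real part. In the real case the eigenvalues have the same sign, since their product is positive, and their sum is negative, so both are negative. In the complex case their common real part is $\operatorname{tr}J/2<0$. Thus $p$ is a stable node or a stable focus of the linearization. By the Hartman--Grobman theorem, or simply the linearized stability theorem ([4, 10]), $p$ is then an asymptotically stable fixed point of the nonlinear system (3--4).

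I do not expect a real obstacle in this argument. The only delicate step is the case split that excludes $\operatorname{tr}J=0$ and $\det J=0$. This is precisely where hyperbolicity is needed, because without it the weak inequalities coming from (5--6) would allow centers or degenerate points. I would also remark that the argument uses only the sign conditions at $p$ itself, not on the whole plane.
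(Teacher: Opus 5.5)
Your proof is correct and follows the same route as the paper: conditions (5--6) give a nonpositive trace and a nonnegative determinant, hyperbolicity makes both strict, and linearized stability finishes the argument. You also spell out the case analysis excluding a zero determinant and a zero trace, which the paper leaves implicit.
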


\begin{proof}
(5--6) imply that the determinant of the Jacobian is positive for any hyperbolic fixed point. Therefore the stability of a fixed point is determined by the trace sign. (5) implies that for any hyperbolic fixed point the trace is negative. Hence the fixed point is stable.
\end{proof}

\subsection{Non Hyperbolic Fixed Points}
In this section we study the asymptotic behavior of non hyperbolic fixed points. For this purpose we first study the asymptotic behavior of focus and nodes of system (3--4).

\begin{lemma}
If the origin is a node or focus, then it has no divergence free neighborhood.
\end{lemma}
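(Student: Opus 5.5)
The plan is to argue by contradiction. Suppose the origin is a node or a focus and that there is a neighborhood $U$ of the origin with $\nabla\cdot F\equiv 0$ in $U$. By (5) both $f_x\le 0$ and $g_y\le 0$, so vanishing divergence forces $f_x\equiv 0$ and $g_y\equiv 0$ in $U$. In particular the trace of the Jacobian is zero there. I would then show that such a flow has a local first integral near the origin. This is incompatible with a node or a focus, since in both cases every trajectory in a punctured neighborhood tends to the origin as $t\to+\infty$ or as $t\to-\infty$.

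The key steps, in order, are the following.

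\emph{Construct a first integral.} Since $f_x+g_y\equiv 0$ on $U$, after shrinking $U$ to a disc the form $g\,dx-f\,dy$ is closed, hence exact. So there is a smooth $H$ with $H_x=g$ and $H_y=-f$, and along trajectories
\[
\frac{d}{dt}H(x(t),y(t))=H_x f+H_y g=gf-fg=0 .
\]
Thus $H$ is constant on every orbit in $U$. This is the same computation as in the proof of Lemma 5. Alternatively, I can avoid $H$ and work directly with the structure $f=f(y)$, $g=g(x)$ on $U$, which follows from $f_x\equiv g_y\equiv 0$. Then $H(x,y)=\int_0^x g(s)\,ds-\int_0^y f(s)\,ds$ is explicit.

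\emph{Reduce to constancy of $H$ near the origin.} Take a trajectory entering a small disc $B\subset U$ that converges to the origin, which exists for a node or focus. By continuity $H$ on that trajectory equals $H(0,0)=0$. For a node or focus all trajectories starting in a punctured neighborhood of the origin converge to it in one time direction. Hence $H\equiv 0$ on a whole neighborhood, so $f=-H_y\equiv 0$ and $g=H_x\equiv 0$ there. This contradicts the standing assumption of Section 3 that $F$ does not vanish on any open set. It also contradicts the assumption that the fixed point is isolated.

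The main obstacle is the second step: being careful about what ``node or focus'' means for a non-hyperbolic point. I would use it in the topological sense, meaning the origin is attracting or repelling on a full punctured neighborhood, possibly with spiralling. I would note that this is the only property used. If instead ``focus'' is read via a linearization with complex eigenvalues, the zero trace together with (5--6) gives purely imaginary eigenvalues. Then I must rule out a center, and a center is exactly the case allowed by the first integral, so the topological reading is essential. A secondary technical point is regularity. Existence of $H$ needs only $F\in C^1$ on a simply connected $U$, which is covered by ``smooth enough''.
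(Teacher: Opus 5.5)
Your argument is correct, but it takes a different route from the paper.

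\textbf{The paper's route.} The paper argues by conservation of flux. On a divergence-free neighborhood the net flux of $F$ through the boundary of any subregion vanishes (equivalently, the flow preserves area). The paper declares this incompatible with all nearby trajectories tending to the origin. Making that rigorous needs either a closed curve around the origin that the flow crosses strictly inward, or a uniform shrinking of the area of a small disc under the flow. The paper leaves both implicit.

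\textbf{Your route.} You integrate the closed form $g\,dx-f\,dy$ to a stream function $H$ and note that $H$ is constant on orbits. Continuity at the attracting origin then forces $H$ to be constant near $0$, hence $F\equiv 0$ there.

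\textbf{What each buys.} Your route is specific to the plane, since it relies on the Poincar\'e lemma on a disc. The flux/area argument works in any dimension. On the other hand, every step of yours is elementary and checkable. It also yields more: near such a point the flow is locally Hamiltonian, which is exactly why a center is the only surviving alternative.

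\textbf{Minor remarks.}
\begin{enumerate}
\item You do not need (5) to build $H$; zero divergence alone makes the form closed.
\item You do not need the standing non-vanishing hypothesis at the end. $F\equiv 0$ near the origin already contradicts the fact that nonconstant orbits approach it.
\item You actually use slightly more than attraction on a punctured neighborhood. To carry the value $H=0$ from the tail of an orbit back to its starting point, the whole forward (or backward) orbit must stay in the disc where $H$ is defined. This is the Lyapunov stability built into the notions of node and focus, and you should state it explicitly.
\end{enumerate}
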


\begin{proof}
Assume it has an $\epsilon$ neighborhood which is divergence free. There exists a $\delta$ neighborhood such that the $\omega$ (resp. $\alpha$) limit of all the trajectories in the neighborhood is the origin. Let
\begin{equation}
\epsilon_{1}=\min\{\epsilon,\delta\}
\end{equation}
In the $\epsilon_{1}$-neighborhood of the origin the flux is preserved since it is divergence free. But all the trajectories go to 0 as $t$ increases (respectively decreases). Thus we have a contradiction. Hence the divergence cannot be zero.
\end{proof}

\begin{lemma}
If the origin is a node or focus of (3--4) then it is stable.
\end{lemma}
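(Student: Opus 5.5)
We argue by contradiction: suppose the origin is a node or focus that is not stable. Since it is a node or focus, trajectories in some $\delta$-neighborhood have the origin as their $\omega$-limit or all of them have it as their $\alpha$-limit. So non-stability means the origin is a repelling node or focus: there is $\delta>0$ such that every trajectory starting in $B_\delta(0)\setminus\{0\}$ tends to $0$ as $t\to-\infty$. The plan is to compare the flux of $F$ across a small closed curve around the origin with the divergence, which by (5) satisfies $\nabla\cdot F=f_x+g_y\le 0$ everywhere.

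First, choose a closed curve $C$ around the origin, inside the repelling neighborhood, that every trajectory crosses transversally outward. For a node or focus this is standard: take a level curve of a local Lyapunov-type function for the time-reversed flow, or a small circle adapted to the linearization. In the degenerate non-hyperbolic case one would use a curve built from trajectory arcs and short transversal segments, as in the proof of the Poincaré--Bendixson theorem. Along such a curve the outward flux $\oint_C f\,dy-g\,dx$ is strictly positive, because $F$ points outward at every point of $C$ and $F$ does not vanish on $C$, the origin being isolated.

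Second, apply Green's theorem exactly as in Lemma 5 on the region $D$ bounded by $C$:
\begin{equation*}
\int\!\!\int_D \nabla\cdot F\,dx\,dy=\oint_C f\,dy-g\,dx>0 .
\end{equation*}
By (5) the left-hand side is $\le 0$, which is a contradiction. Hence the origin is attracting, i.e.\ stable.

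Lemma 8 reinforces this: the origin has no divergence-free neighborhood, so the divergence is strictly negative on a set of positive measure near $0$. This gives strictness on the integral side as well, so the inequality is contradicted even if the flux estimate were only non-strict.

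The main obstacle is the first step in the non-hyperbolic case, namely producing a genuinely transversal closed curve around a degenerate repelling node or focus. If this is delicate, I would instead use a limit argument. Take the region $D_t$ bounded by the image under the flow of a small circle at time $-t$. Its area tends to $0$ as $t\to\infty$, since it shrinks into the origin. Liouville's formula gives $\frac{d}{dt}\operatorname{area}(D_t)=\int\!\!\int_{D_t}\nabla\cdot F\le 0$ in forward time. But the forward-time images expand from near $0$ to a region of fixed positive area, so the area must increase somewhere. This contradicts $\nabla\cdot F\le 0$ combined with Lemma 8.
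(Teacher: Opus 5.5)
Your argument is correct and is essentially the paper's: both use $\nabla\cdot F=f_x+g_y\le 0$ and the divergence (Green) theorem to rule out the net outward flux that a repelling node or focus would force across a small closed curve around the origin. The only difference is where strictness enters. The paper uses Lemma 8 to make the divergence integral negative, whereas your transversal curve (obtainable from a converse Lyapunov function for the reversed flow) makes the flux strictly positive, so in your version, as in your Liouville fallback, Lemma 8 is not actually needed.
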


\begin{proof}
Let the origin be a node or a focus. There exists an $\epsilon$ neighborhood such that all its trajectories approach the origin as $t$ increases or as it decreases. (5) and Lemma 8 imply that the divergence in this neighborhood is negative. Hence the flux is inward, so the origin is stable.
\end{proof} 

We obtain that nodes and foci are stable. The next sections use this result to identify the stability for several non-hyperbolic cases. We assume that the origin is an isolated fixed point and that the right hand side functions are smooth enough.

\subsubsection{Zero Eigenvalue with Multiplicity One}
There are three possible normal forms of nonhyperbolic fixed points in which the Jacobian trace does not vanish (i.e., there exists exactly one eigenvalue that is zero).

\paragraph{Case 1}
\begin{equation}
\frac{dx}{dt}=ax+by+H(x,y), \quad a<0
\end{equation}
\begin{equation}
\frac{dy}{dt}=R(x,y)
\end{equation}

\begin{lemma}
Let the origin be an isolated fixed point of system (26--27). Then the origin is a node.
\end{lemma}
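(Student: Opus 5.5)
The plan is to reduce the system to the centre manifold and read off the type of the fixed point from the one-dimensional reduced flow, using the sign conditions (5--6) to exclude a saddle and a saddle-node. First, since $a<0$ and the linear part of $R$ must vanish in the Jacobian's second row up to the form chosen (trace $=a\neq 0$, determinant $=0$), the Jacobian at the origin has eigenvalues $a<0$ and $0$. I would perform the linear change of variables $u=x+\frac{b}{a}y$, $v=y$, which diagonalises the linear part, so that $\dot u=au+\tilde H(u,v)$ and $\dot v=\tilde R(u,v)$ with $\tilde H,\tilde R$ of order at least two. By the centre manifold theorem there is a local invariant curve $u=\phi(v)$ with $\phi(0)=\phi'(0)=0$. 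The reduced equation is $\dot v=\tilde R(\phi(v),v)=:\psi(v)$. Since the origin is isolated, $\psi$ is not identically zero near $0$, and by smoothness we can write $\psi(v)=cv^{k}+o(v^{k})$ with $c\neq 0$ and $k\ge 2$. I would argue this via analyticity, or alternatively via the assumption that $F$ does not vanish on open sets.

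The standard classification (Carr; Andronov et al.) then says the following. If $k$ is odd, the origin is a node when $c<0$ and a saddle when $c>0$. If $k$ is even, the origin is a saddle-node. So the whole proof is to exclude the case $k$ even, and the case $k$ odd with $c>0$.

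The key step is to relate $\psi'$ to the Jacobian entries. Along the centre manifold, $\psi'(v)=\tilde R_u\phi'(v)+\tilde R_v$. Translating back, this is a combination of $g_x$ and $g_y$ evaluated at points of the curve, where $g=R$. On the centre manifold we also have $f\approx 0$ to leading order, so $\phi$ is essentially the curve $f=0$. The natural observation is that $\psi(v)$ is, up to a positive factor, the quantity $g$ restricted to the nullcline $\{f=0\}$. Differentiating along that nullcline gives
\[
\frac{d}{dv}\,g \;=\; g_y - \frac{g_x f_y}{f_x},
\]
using $\dot x$'s nullcline slope $-f_y/f_x$, which is well defined since $f_x\approx a<0$ near the origin. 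Conditions (5) and (6) make both terms nonpositive: $g_y\le 0$, and $g_xf_y\le 0$ combined with $f_x<0$ gives $-g_xf_y/f_x\le 0$. Hence $\psi$ is nonincreasing near $0$ with $\psi(0)=0$. This forbids an even leading power, and it forces $c<0$ when $k$ is odd. Therefore the origin is a node, and it is stable, consistent with Lemma 9.

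The main obstacle is making the comparison between the centre manifold and the nullcline $\{f=0\}$ rigorous, since the two curves coincide only to leading order. To handle this, I would instead use a topological argument. By the implicit function theorem, $\{f=0\}$ is locally a graph $x=\eta(y)$, and $f>0$ to its left and $f<0$ to its right. The monotonicity of $G(y)=g(\eta(y),y)$ established above shows that $G$ changes sign from positive to negative at $0$, with $G\not\equiv 0$ by isolation. This gives a small curvilinear box that is positively invariant, with all of its boundary flow pointing inward. In that box the flow has the $a$-direction contracting and the nullcline direction contracting as well. By the Poincaré–Bendixson theorem, and since there are no periodic orbits inside (Lemma 5 and Lemma 8), every trajectory tends to the origin. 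Excluding a saddle this way leaves only the node alternative of the classification.
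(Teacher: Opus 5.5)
Your argument is correct in substance, and it shares the paper's skeleton. Both proofs find the nullcline $f=0$ by the implicit function theorem, restrict $g$ to it, and then apply the classical node/saddle/saddle-node classification of [1, 10]. The decisive step is different, though. The paper tries to read off the leading monomial of $R$ along $x=-\frac{b}{a}y+l(y)$ by sign-checking the lowest pure $y^{j}$ and pure $x^{i}$ coefficients separately. That bookkeeping does not really control mixed monomials $x^{i}y^{j}$, or cancellations after the substitution. Your identity $G'(y)=g_y-g_xf_y/f_x\le 0$ uses (5)--(6) pointwise along the whole nullcline. Combined with isolation, it gives $G>0$ for $y<0$ and $G<0$ for $y>0$ with no expansion at all. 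The paper's route would, if completed, name the leading term; yours gives only its sign pattern. That sign pattern is exactly what the classification needs, so yours is the more robust argument.

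The obstacle you flag closes in two lines, so you can drop the box. Write the centre manifold as $x=\mu(y)$. Invariance gives $f(\mu(y),y)=\mu'(y)\,g(\mu(y),y)$. Applying the mean value theorem first to $f$ and then to $g$, at points $\xi_1,\xi_2$ between $\eta(y)$ and $\mu(y)$, yields
\[
G(y)=g(\mu(y),y)\Bigl[1-\frac{g_x(\xi_2,y)\,\mu'(y)}{f_x(\xi_1,y)}\Bigr].
\]
The bracket tends to $1$ because $g_x(0,0)=0$, $\mu'(0)=-b/a$ and $f_x(0,0)=a$. Hence the reduced field on the centre manifold has the same sign as $G$ near $0$.

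If you did keep the box, two repairs are needed. As written it cites Lemma 8, which presupposes a node or focus; use instead $\mathrm{div}\,F\le f_x<0$ near the origin, which also excludes homoclinic loops. The box width must also be tuned against $|G(\pm\epsilon)|$, because the sides are tilted by $\eta'\approx -b/a$.

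Finally, non-vanishing of $F$ on open sets does not yield a finite-order leading term (think of $e^{-1/y^{2}}$). Either assume analyticity, as the paper tacitly does, or use the sign pattern of the reduced field directly. Together with $a<0$, that sign pattern already gives a (topological) stable node.
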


\begin{proof}
(5--6) imply that $R$ is a function of order $m$ where $m$ is odd and $m\ge3$. We apply the following time rescaling:
\begin{equation}
\overline{t}=at
\end{equation}
and obtain the system
\begin{equation}
\frac{dx}{d\overline{t}}=x+\frac{b}{a}y+\frac{H(x,y)}{a}
\end{equation}
\begin{equation}
\frac{dy}{d\overline{t}}=\frac{R(x,y)}{a}
\end{equation}
where $a<0$. The Implicit Function Theorem implies that near the origin we have:
\begin{equation}
x=-\frac{b}{a}y+l(y)
\end{equation}
where $l$ is a function of order $m\ge2.$ (5) implies that the lowest coefficient that does not vanish, and contains $y$ in (30) is positive, and the order of the term is odd. (6) Implies that the lowest coefficient of a term of the form $x^{i}$ that does not vanish differs in its sign from $b,$ and $i$ is odd. Hence substituting (31) in (30) we obtain that the lowest term that does not vanish is positive and has odd order term. This form provides a node ([1, 10]).
\end{proof}

\paragraph{Case 2}
\begin{equation}
\frac{dx}{dt}=ax+H(x,y)
\end{equation}
\begin{equation}
\frac{dy}{dt}=cx+R(x,y)
\end{equation}
where $a<0$. Here the Implicit Function Theorem and (6) imply that the lowest coefficient of a term of the form $y^{j}$ in $H$ differs in its sign from $c$, $j$ is odd and satisfies:
\begin{equation}
x=-\frac{f_{y^{j}}}{a}y^{j}+l(y)
\end{equation}
where $l$ is of order greater than $j$. At this stage we can state the following lemma:

\begin{lemma}
If the origin is an isolated fixed point of (32--33), then it is a node.
\end{lemma}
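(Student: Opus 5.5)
The plan follows the proof of Lemma 9 closely: reduce to the one-dimensional flow on a center manifold. The tool is the standard reduction theorem for a nonhyperbolic fixed point with one zero eigenvalue and one nonzero eigenvalue ([1, 10]). Write the restricted equation as $\dot y = \alpha y^{k}+\dots$ with $\alpha\neq0$. If $k$ is odd, the origin is a node: it is stable if $\alpha$ has the sign of $a$ and a saddle otherwise. If $k$ is even, the origin is a saddle-node. So it suffices to show that the restricted equation has odd leading order with the correct sign.

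First, rescale time by $\overline{t}=at$ as in (28), so the linear part in $x$ becomes $x$. The Jacobian at the origin is
\[
\begin{pmatrix} a & 0\\ c & 0\end{pmatrix},
\]
so the zero eigenvalue is associated with $y$. By the Implicit Function Theorem, as already noted in (34), the nullcline $f=0$ near the origin is the graph $x=-\frac{f_{y^{j}}}{a}y^{j}+l(y)$. Here $j$ is odd, and the coefficient $f_{y^{j}}$ of $y^{j}$ in $H$ has sign opposite to $c$. To leading order this graph is also the center manifold, up to terms that do not affect the lowest nonvanishing power.

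Next, substitute this graph into the second equation (33):
\[
\dot y = c\Bigl(-\frac{f_{y^{j}}}{a}y^{j}+l(y)\Bigr)+R\Bigl(-\frac{f_{y^{j}}}{a}y^{j}+\dots,\,y\Bigr).
\]
The term $cx$ contributes $-\frac{c f_{y^{j}}}{a}y^{j}$. Its coefficient has the sign of $-\frac{(-)}{a}$, which is negative since $a<0$. In the rescaled time it has the opposite sign, which is the same sign as in the $x$-equation. The exponent $j$ is odd.

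The remaining task is to show that $R$ cannot produce a lower-order or cancelling term. Condition (5) with $g_y\le0$ forces the lowest pure power $y^{m}$ appearing in $R$ to be odd with a nonpositive coefficient. Mixed terms $x^{p}y^{q}$ become $y^{pj+q}$ after substitution, and these are of order at least $j+1$ unless $p=0$. If some $y^{m}$ with $m\le j$ occurs in $R$, it has a negative coefficient. It then either dominates or adds to the $y^{j}$ term with the same sign, so no cancellation occurs. In every case the lowest term is odd with a negative coefficient, which gives a stable node.

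The main obstacle is exactly this cancellation check when $m=j$, or when mixed terms of $R$ happen to land at order $j$. I would treat it by comparing signs using (5) and (6) together. The cross term $cx$ and $g_y$ both push the same way because $c\,f_{y^{j}}<0$ by (6) and $g_y\le0$ by (5). The reduced flow is then negative-definite to leading order. The remaining step is to invoke ([1, 10]) to conclude that the origin is a node.
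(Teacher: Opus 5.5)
Your proposal is correct and takes the paper's route: the paper's proof is just ``substitute (34) into (33) and argue as in the Case~1 lemma,'' which means using (5) and (6) to show that the lowest surviving term is of odd order with the stabilizing sign, so the origin is a node. Your check that the lowest pure $y^m$ term of $R$ is odd with negative coefficient, that mixed terms $x^py^q$ enter only at order $\ge j+1$, and that the case $m=j$ adds two negative coefficients supplies exactly the detail the paper leaves implicit.
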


\begin{proof}
Substituting (34) in (33) and using the same methods as in Lemma 10.
\end{proof}

\paragraph{Case 3}
\begin{equation}
\frac{dx}{dt}=ax+H(x,y)
\end{equation}
\begin{equation}
\frac{dy}{dt}=R(x,y)
\end{equation}
where $a<0$. In Case 3 we can use the same method as in the previous lemmas, but only for the cases where $f_{y}, g_{x}$ achieve a minimum or a maximum at the origin. Otherwise the origin is not necessarily a node.

\begin{remark}
We assumed during the discussion above that $f_{y}$ and $g_{x}$ were not identically zero. If $f$ is a function only of $x$, then we cannot use the Implicit Function Theorem, but since $f$ is monotone at $x$, (5) implies $f$ vanishes only at $x=0$. Lemma 2 implies then that the fixed point is not isolated.
\end{remark}

The following theorem summarizes this section:

\begin{theorem}
If the origin is an isolated fixed point of system (3--4) that satisfies:
\begin{enumerate}
    \item Conditions (5--6)
    \item There exists exactly one zero eigenvalue.
    \item $f_{y}$ and $g_{x}$ achieve a non-degenerate maxima or minima at the origin.
\end{enumerate}
Then the origin is a node.
\end{theorem}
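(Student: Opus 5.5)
The plan is to reduce Theorem 2 to the three normal forms already treated in this section, and then to dispose of the one case, Case 3, that is not yet covered by a lemma.

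\textbf{Step 1: reduction to the normal forms.} By hypothesis 2 exactly one eigenvalue of the Jacobian $J$ at the origin vanishes, so $\det J=0$ and $\operatorname{tr}J\neq 0$. Conditions (5) force $f_x(0,0)\le 0$ and $g_y(0,0)\le 0$, hence $\operatorname{tr}J<0$. Since
\[
\det J=f_xg_y-f_yg_x
\]
is a sum of two nonnegative terms by (5--6), $\det J=0$ forces $f_xg_y=0$ and $f_yg_x=0$. After possibly interchanging the roles of $x$ and $y$ (Remark 1 covers this symmetry), we may take $a=f_x(0,0)<0$ and $g_y(0,0)=0$. The relation $f_yg_x=0$ then leaves three possibilities:
\begin{itemize}
\item $b=f_y(0,0)\neq 0$ and $g_x(0,0)=0$, which is Case 1;
\item $f_y(0,0)=0$ and $c=g_x(0,0)\neq 0$, which is Case 2;
\item both $f_y(0,0)$ and $g_x(0,0)$ vanish, which is Case 3.
\end{itemize}
Lemma 10 and Lemma 11 give a node in the first two cases. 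If $f_y$ or $g_x$ vanishes identically, Remark 2 shows the origin is not isolated, which is excluded.

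\textbf{Step 2: Case 3 setup.} Here $a<0$, and $f_y$, $g_x$ vanish at the origin while attaining a non-degenerate extremum there (hypothesis 3). I would first apply the Implicit Function Theorem to $f=0$, since $f_x(0,0)=a\neq0$, writing $x=\phi(y)$ near $0$. Because $f_y$ vanishes at $0$ with a non-degenerate extremum, $f_y$ has constant sign on a punctured neighborhood, and its Taylor expansion begins with a definite quadratic form. Consequently $\phi'(y)=-f_y/f_x$ has constant sign near $0$, and $\phi(y)=\beta y^{3}+O(y^4)$ with $\beta\neq0$, or at least $\phi$ is monotone with a controlled leading order. The same argument applied to $g_x$ gives the sign of $g_x$ on a punctured neighborhood, and (6) forces it to be opposite to, or zero relative to, the sign of $f_y$.

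\textbf{Step 3: the reduced equation.} Following Lemma 10, I substitute $x=\phi(y)$ into $g$ to get $\psi(y)=g(\phi(y),y)$. Then
\[
\psi'(y)=g_x\,\phi'+g_y=-\frac{g_xf_y}{f_x}+g_y .
\]
The first term has the sign of $g_xf_y/|f_x|$, which is $\le 0$ by (6), and $g_y\le0$ by (5). So $\psi'\le 0$ near $0$, which makes $\psi$ nonincreasing. Isolation of the fixed point means $\psi$ has an isolated zero at $y=0$, so $\psi$ changes sign from positive to negative. Its leading term is therefore of odd order with negative coefficient. After the time rescaling $\bar t=at$ used in Lemma 10, the flip of sign gives exactly the situation Lemma 10 identifies as a node, by the standard center-manifold classification ([1,10]). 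In fact this monotonicity argument seems to work without invoking the extremum hypothesis beyond guaranteeing a well-defined leading order.

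\textbf{Main obstacle.} The delicate step is justifying that the restricted flow on the center manifold is governed by $\psi$ with the right leading order. The center manifold is tangent to, but not equal to, the curve $x=\phi(y)$. I would handle this with the standard lemma that for a saddle-node-type normal form the reduced dynamics agree to leading order with $g$ evaluated on the nullcline $f=0$. Hypothesis 3 then serves to exclude flat, infinite-order degenerate cases, guaranteeing a finite odd leading order so that the classification applies. Once the reduced equation $\dot y = \kappa y^{m}+\dots$ has $m$ odd and $\kappa<0$ (with $a<0$), the origin is a stable node, completing the proof.
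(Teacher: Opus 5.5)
Your proof is correct. Its skeleton matches the paper's: reduce to Cases 1--3, solve $f=0$ by the Implicit Function Theorem, substitute into $g$, and read off an odd leading term. The key step, however, is different. The paper gets the sign and parity of the leading term by bookkeeping of lowest nonvanishing Taylor coefficients, separately in each normal form (Lemmas 10--11). For Case 3 it merely asserts that ``the same method'' works under the extremum hypothesis. You instead use the identity $\psi'(y)=g_y-g_xf_y/f_x$ along the nullcline, which turns (5)--(6) at once into monotonicity of the reduced function. Combined with isolation, this forces a sign change from $+$ to $-$, hence an odd leading order with negative coefficient. The classical classification of semi-hyperbolic points then gives a stable node; that classification is phrased via the restriction of the slow component to the zero set of the fast one, so the center-manifold issue you flag is already absorbed there. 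Your route buys a uniform treatment of all three cases without coefficient-level sign claims, so you do not really need Lemmas 10--11. It also shows that hypothesis 3 serves only to rule out flatness. For analytic $f,g$, isolation alone gives finite order, so your argument indicates that the paper's remark that without the extremum condition ``the origin is not necessarily a node'' is overly cautious, at least for analytic right-hand sides. The one claim you state without proof is exactly where hypothesis 3 enters, so spell it out. In Case 3, a non-degenerate extremum with value $0$ makes $f_y$ and $g_x$ nonvanishing off the origin, so by (6) they have strictly opposite signs (not ``opposite or zero''). Since $\phi(y)=O(y^2)$, you have $f_y(\phi(y),y)=qy^2+O(|y|^3)$ and $g_x(\phi(y),y)=py^2+O(|y|^3)$ with $p,q\neq 0$ and $pq<0$. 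Hence $\psi'(y)\le-\kappa y^4$ near $0$ for some $\kappa>0$, so $|\psi(y)|\ge\kappa|y|^5/5$ and the leading order of $\psi$ is $3$ or $5$. The same computation in Cases 1--2 gives $\psi'\le-\kappa y^2$, hence order exactly $3$.
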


\subsubsection{Zero Eigenvalue with Multiplicity Two}
The case that the Jacobian matrix has a zero eigenvalue with multiplicity two, but does not vanish, is analyzed in this section.

\begin{lemma}
If the origin is a nonhyperbolic fixed point of (3--4) that satisfies:
\begin{enumerate}
    \item Conditions (5--6).
    \item The Jacobian matrix has two zero eigenvalues but it does not vanish.
    \item It is not surrounded by periodic orbits.
\end{enumerate}
Then the origin is a node or focus.
\end{lemma}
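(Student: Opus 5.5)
Since the Jacobian $J$ at the origin has both eigenvalues zero, its trace and determinant vanish. By (5) both diagonal entries are nonpositive, so a zero trace forces $f_x(0,0)=g_y(0,0)=0$. A zero determinant then gives $f_y(0,0)g_x(0,0)=0$. Because $J$ does not vanish, exactly one off-diagonal entry is nonzero. By the symmetry noted in Remark 1 (swap the roles of $x,y$ and of $f,g$), we may assume $b:=f_y(0,0)\neq0$ and $g_x(0,0)=0$, and, reversing the sign of $y$ if necessary, $b=1$. The system is then in the nilpotent normal form
\begin{equation*}
\dot x = y + H(x,y), \qquad \dot y = R(x,y),
\end{equation*}
with $H,R$ of order at least two. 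The plan is to classify the origin by the Andronov--Leontovich (Takens--Bogdanov) procedure for nilpotent singular points. Solve $y+H(x,y)=0$ near the origin by the Implicit Function Theorem as $y=\varphi(x)$, and expand
\begin{equation*}
R(x,\varphi(x)) = a_k x^k + \cdots,\qquad a_k\neq 0,
\end{equation*}
which is possible because the origin is isolated. Also expand the divergence along the same curve,
\begin{equation*}
\bigl(H_x+R_y\bigr)(x,\varphi(x)) = b_n x^n+\cdots .
\end{equation*}

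The key step is to use (5--6) to pin down the signs and parities in these expansions. Write $g(x,\varphi(x))$ with $g_x\le0$ along the curve, where $f=0$. Since $g_x(0,0)=0$ and $g_x\le 0$ everywhere, and since $f_y\,g_x\le0$ with $f_y>0$ near the origin forces $g_x\le0$ there as well, the function $x\mapsto g(x,\varphi(x))$ is nonincreasing near $0$. I plan to show that it is strictly decreasing, using that $F$ does not vanish on open sets together with the isolatedness of the origin. Then $k$ is odd and $a_k<0$. In the Andronov classification with $k=2m+1$ and $a_k<0$, the origin is not a saddle. It is a center or focus when $b_n=0$ or $n>m$, or when $n=m$ and $b_n^2+4(m+1)a_k<0$; otherwise it is a node or an elliptic-hyperbolic sector configuration. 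Condition (5) gives $H_x+R_y\le0$, so $b_n<0$ when it is nonzero, and $n$ must be even.

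Having excluded saddles and cusps (the latter require $k$ even), the remaining work is to remove the possibilities other than node or focus. An elliptic sector contains homoclinic orbits. The region they bound has zero net flux across its boundary, apart from the fixed point, so Green's theorem as in Lemma 5 forces the divergence to vanish identically there. That region is open, and a divergence-free open set contradicts the monodromic or node sector structure adjacent to it, via the argument of Lemma 8. A center is excluded by hypothesis 3. Hence the origin is monodromic (a focus) or of node type, and Lemma 9 then gives stability as a byproduct.

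The main obstacle is the sign and parity step for $R(x,\varphi(x))$. The inequality $g_x\le0$ controls only the partial derivative, not the total derivative along $y=\varphi(x)$, since the term $g_y\varphi'$ has uncontrolled sign. To handle this I will first use $f_x\le0$ with $f_y>0$: then $\varphi'=-f_x/f_y\ge0$, so $g_y\varphi'\le0$. Combined with $g_x\le0$, the total derivative is $\le0$, which yields monotonicity. Strictness then follows from isolatedness, as in Lemmas 1--2.
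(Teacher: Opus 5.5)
Your proposal is correct and follows the paper's route: reduce to the nilpotent form, use (5--6) to force $k$ odd, $a_k<0$ and $n$ even, read off center/focus/node from the Andronov classification, and discard the center by hypothesis 3. In fact your computation $\frac{d}{dx}g(x,\varphi(x))=g_x+g_y\varphi'\le 0$, with $\varphi'=-f_x/f_y\ge 0$, supplies the justification for the sign and parity of $a_kx^k$ that the paper only asserts. You also carry it out in the original coordinates where (5--6) hold, rather than after the nonlinear change to normal form. Your separate flux argument against elliptic sectors is unnecessary, and its last step is imprecise as worded: the elliptic--hyperbolic case of the classification requires $n$ odd, and you have already shown $n$ is even.
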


\begin{proof}
We can write the equations in a normal form ([1, 10]):
\begin{equation}
\frac{dx}{dt}=y+l(y)
\end{equation}
\begin{equation}
\frac{dy}{dt}=a_{k}x^{k}(1+h(x))+b_{n}x^{n}y(1+g(x))+y^{2}R(x,y)
\end{equation}
where $l(y)$ is of order $m\ge2,$ where the functions $g, h$ and $R$ are smooth enough, and $g$ and $h$ vanish on $x=0$. (5--6) imply that $k$ is odd, $a_{k}$ is negative and that $n$ is even. Thus we may obtain that the origin is one of the following ([1, 10]):
\begin{enumerate}
    \item Center.
    \item Focus.
    \item Node.
\end{enumerate}
Since we assume that there are no periodic solutions only 2 and 3 are possible.
\end{proof}

The following theorem summarizes the results of this section.

\begin{corollary}
If the origin is a fixed point of system (3--4) that satisfies:
\begin{enumerate}
    \item (5--6)
    \item There are two zero eigenvalues and the Jacobian matrix does not vanish.
\end{enumerate}
Then the origin is stable.
\end{corollary}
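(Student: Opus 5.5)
The plan is to split according to whether the origin is surrounded by periodic orbits, and in each case reduce to results already established.

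\emph{Case 1: the origin is not surrounded by periodic orbits.} Conditions (5--6) and the assumption of two zero eigenvalues with a non-vanishing Jacobian are exactly the hypotheses of Lemma 12. So the origin is a node or a focus, and Lemma 9 gives that it is stable. Lemma 9 rests on Lemma 8 (a node or focus has no divergence-free neighborhood) together with (5), which makes the divergence non-positive. In this case nothing new has to be proved.

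\emph{Case 2: the origin is surrounded by periodic orbits.} This is essentially the center alternative in the normal form (37--38). By Lemma 5, every periodic orbit encloses a divergence-free domain. Since a center is surrounded by a continuum of closed orbits, a whole punctured neighborhood of the origin is filled by closed orbits. Trajectories starting close to the origin therefore stay on closed curves arbitrarily close to it, and the origin is Lyapunov stable, though not asymptotically stable. I would state this distinction explicitly, because ``stable'' in the corollary must be read as Lyapunov stability for this case to go through.

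There is a subtler variant in which the origin is surrounded by periodic orbits that do not fill a neighborhood. This would give a focus-like spiral inside some cycle, or a limit cycle with the origin inside. In that variant, Corollary 1 and Lemma 7 show that any limit cycle attracts only from its exterior and is stable. Meanwhile, the interior domain $D$ is divergence-free by Lemma 5. Lemma 8 forbids a node or focus in a divergence-free neighborhood, so inside $D$ the origin must again be a center. It is therefore Lyapunov stable.

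\emph{Main obstacle.} The hardest step is Case 2: justifying that an origin surrounded by periodic orbits is genuinely a center, so that orbits fill a neighborhood, rather than some degenerate configuration. I would handle it through the trichotomy in the proof of Lemma 12 (center, focus, node), which excludes other configurations for the normal form (37--38) with $k$ odd, $a_k<0$ and $n$ even. With that trichotomy, surrounded by periodic orbits and not a focus or node means a center, and a center is stable.
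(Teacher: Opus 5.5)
Your argument is correct and more complete than the paper's. The paper's entire proof is ``This follows from Lemmas 12 and 9.'' That citation covers only your Case 1. Lemma 12 carries the extra hypothesis that the origin is not surrounded by periodic orbits, which the corollary does not assume, so the paper silently drops the center branch of its own trichotomy.

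Your case split supplies the missing piece, and your insistence on Lyapunov stability is necessary. Consider $\dot x = y$, $\dot y = -x^{3}$. It satisfies (5--6), since $f_x=g_y=0$ and $g_x f_y=-3x^{2}\le 0$. The origin is its only zero, and its Jacobian is nonzero and nilpotent. It is a center, because it conserves $y^{2}/2+x^{4}/4$. So it is Lyapunov stable but not attracting, whereas Lemma 9 delivers attraction (``the flux is inward''). The corollary can therefore only hold with ``stable'' read in the Lyapunov sense.

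One simplification: you do not need the detour through Lemma 5, Lemma 8 and the limit-cycle remarks. The trichotomy (center, focus, node) in the proof of Lemma 12 comes from (5--6) and the normal form alone. The ``not surrounded by periodic orbits'' hypothesis is used only afterwards, to discard the center. So you can argue directly: a node or focus is stable by Lemma 9, and a center is Lyapunov stable because arbitrarily small closed orbits bound invariant disks around the origin.

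Your Lemma 5/Lemma 8 reasoning is correct: a periodic orbit around the origin forces a divergence-free neighborhood, hence no node or focus. But it only identifies which branch of the trichotomy you are in. Also, the first paragraph of your Case 2 already presupposes a center before that justification is given.
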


\begin{proof}
This follows from Lemmas 12 and 9.
\end{proof}

\subsection*{Acknowledgements}
I wish to thank Professor Peter Kloeden and Dr. Stefan Siegmund for their beneficial ideas which enhanced my knowledge. I also thank Dr. Eliezer Schohat for fruitful discussions.

\end{document}